
%
\documentclass[11pt]{article}%
%
\usepackage{amsmath}%
\usepackage{amsfonts}%
\usepackage{amssymb}%
\usepackage{graphicx}



\newcommand{\Cb}{{\bf C}}

\newcommand{\Vb}{{\bf V}}

\newcommand{\F}{{\mathbb F}}

\newcommand{\Z}{{\mathbb Z}}

\newcommand{\Q}{{\mathbb Q}}
\newcommand{\Pp}{{\mathbb P}}
\newcommand{\CC}{{\mathbb C}}

\newcommand{\A}{{\mathcal A}}
\newcommand{\B}{{\mathcal B}}
\newcommand{\Cc}{{\mathcal C}}

\newcommand{\Ee}{{\mathcal E}}

\newcommand{\KK}{{\mathcal K}}

\newcommand{\V}{{\mathcal V}}
\newcommand{\W}{{\mathcal W}}

\newcommand{\Hm}{{\text{Hom}}}

\newcommand{\alb}{{\text{Alb}}}
\newcommand{\aut}{{\text{Aut}}}
\newcommand{\enn}{{\text{End}}}
\newcommand{\prym}{{\text{Prym}}}
\newcommand{\rk}{{\text{rk}}}

\newtheorem{thm}{Theorem}[section]

\newtheorem{lema}[thm]{Lemma}

\newtheorem{prop}[thm]{Proposition}

\newtheorem{deff}[thm]{Definition}
\newtheorem{rem}[thm]{Remark}

\numberwithin{equation}{section}
\newenvironment{proof}[1][Proof]{\textbf{#1.} }{\ \rule{0.5em}{0.5em}}
\begin{document}

\title{The rational points on certain Abelian varieties over function fields}
\author{Sajad Salami, \\
Institute of Mathematics and Statistics \\
State University of Rio de Janeiro, Brazil}
\date{}
\maketitle
\begin{abstract}
In this paper, we consider    Abelian varieties over function fields that  arise
as twists of Abelian varieties by  cyclic covers of  irreducible quasi-projective varieties. 
Then, in terms of Prym varieties associated to the cyclic  covers, we  prove a structure theorem   on their  Mordell-Weil group. 
Our results give an explicit method for construction of   elliptic curves, hyper- and super-elliptic Jacobians that have large  ranks over function fields of certain varieties.
\end{abstract}

{\bf Subjclass 2010: }{Primary 11G10; Secondary  14H40} %

{\bf keywords}: Mordell-Weil rank, Abelian variety, function field of varieties,  Prym varieties associated to the cyclic covers, Jacobian variety.

\section{Introduction and main results}

Let $\A$ be an Abelian variety defined  over an arbitrary global  field $k$. By the Mordell-Weil Theorem,  the set  $\A (k)$
of $k$-rational points on $A$ is a  finitely generated abelain group  \cite{slm3}. In other words,  one has   $\A (k)\cong  \A (k)_{tors} \oplus \Z^r$ where $\A (k)_{tors}$ is   
a finite subgroup of $\A (k)$ that is  called  the torsion subgroup; and  $r$ is a  non-negative number 
$r$  which is called the (Mordell-Weil) rank of $\A$ over $k$ and is denoted  by $\rk (\A (k))$.
It is a  mysterious  quantity  associated to  an Abelian variety.  
Finding Abelian varieties  with large ranks    is one of the most challenging  problems in  Arithmetic   and Diophantine  geometry. 

For example,  when $\A$ is an elliptic curve defined over $k=\Q$   a folklore conjecture  asserts that  the 
rank of an elliptic curve over $\Q$ can be arbitrary large \cite{slm1}. This conjecture should now be regarded as being in serious doubt by the results of J. Park and  et. al., in \cite{poonen}. In short, they predict that the 
number of elliptic curves over $\Q$ with rank $\geq 21$ is finite. However, in 2006, Elkies showed the 
existence of   an elliptic  curve over $\Q$ with  $ 28$ independent generators.
In the case of elliptic curves over quadratic number  fields $k=\Q(\sqrt{d})$
the largest known rank is $30$ with $d=-3$, which has been found by    F. Najman.
To see the equation of these curves and more information on the  high rank elliptic
curves over rational numbers and quadratic number fields, we refer the reader to \cite{duje1}.

In contrast, for each prime $p$,  there are known explicit elliptic curves over $k=\F_p(t)$ with arbitrary large rank 
\cite{tate-shaf,ulmer1}. In the case $k=\CC (t)$,   it has been proved that for a very general elliptic curve $E$ over $k$ 
with height $d \geq 3$ and  every finite rational extension $k'$  of $k$ the Mordell-Weil  group $E(k')$
is a trivial group, see \cite{ulmer2}.

In this paper, we  generalize the main result of  Hazama in \cite{haz3} 
to an arbitrary cyclic $s$-covers of irreducible quasi-projective varieties for any integer  $s \geq 2$. 
We fix a global field  $k$  of characteristic $\geq 0$ not dividing  $s$, so that
it contains an $s$-th root of unity, which is  denoted by $\zeta$.
Let us denote by $\A [s](k)$ the subgroup of  $k$-rational $s$-division points of $\A$.
We assume  that there exists an order $s$ automorphism $\sigma \in \aut(\A).$  
Given the irreducible quasi-projective varieties $\V$ and $\V'$ with function fields $\KK$ and $\KK'$, respectively, 
we denote by $\prym_{\V'/\V}$  the  Prym variety associated to the cyclic $s$-cover $\pi : \V' \rightarrow \V$, all defined over $k$. 
See Section \ref{Ehcc} for the definition and some properties of  $\prym_{\V'/\V}$.
Let  $G=\left\langle  \gamma \right\rangle$  be the order $s$ cyclic Galois group of the  extension $\KK'|\KK$.  
Denote by  $\A_a$  the twist of $\A$ with  the extension  $\KK'|\KK$, 
equivalently by the $1$-cocycle $a=(a_u) \in Z^1(G, \aut(\A))$ 
given by $a_{id}=id$ and $a_{\gamma^j}=\sigma^j$ for each $\gamma^j \in G$, see \cite{bor-ser, haz1}.
Then, we have  the following theorem which is the main result of this article.
\begin{thm}
	\label{main1} 
	Notation being as above, we  assume that there exists a  $k$-rational  point $v'_0 \in \V'(k)$.  
	Then we have an isomorphism of  Abelian groups:
	$$\A_a(\KK) \cong \Hm_{k} (\prym_{\V'/\V}, \A) \oplus \A [s](k).$$
	Moreover, if  $\prym_{\V'/\V}$ is $k$-isogenous to $ \A^n \times \B$ for some positive integer $n$ and
	and an  Abelian variety   $\B$  over $k$ with  $\dim(\B)=0$   or  $\dim(\B)> \dim (\A)$ and  no  irreducible components  $k$-isogenous to $\A$,  then 
	$\rk(\A_a(K))\geq n\cdot \rk (\enn_{k}(\A)).$
\end{thm}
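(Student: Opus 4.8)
The plan is to deduce the rank inequality formally from the isomorphism in the first part of the theorem, using only standard structural facts about groups of homomorphisms between Abelian varieties over $k$.

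First I would take ranks in the isomorphism $\A_a(\KK) \cong \Hm_{k}(\prym_{\V'/\V}, \A) \oplus \A[s](k)$. Since $k$ is a global field, $\A[s](k) \subseteq \A(k)_{tors}$ is finite and contributes $0$ to the rank, so it suffices to show $\rk_\Z \Hm_{k}(\prym_{\V'/\V}, \A) \geq n\cdot \rk(\enn_{k}(\A))$. Recall that for Abelian varieties $X, Y$ over $k$ the group $\Hm_{k}(X, Y)$ is a finitely generated free $\Z$-module; the torsion-freeness is the key point below, and follows because the image of any homomorphism is an Abelian subvariety, hence is trivial if it is killed by a positive integer.

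Next I would use the hypothesis that $\prym_{\V'/\V}$ is $k$-isogenous to $\A^{n} \times \B$. Fix a $k$-isogeny $\phi\colon \prym_{\V'/\V} \to \A^{n}\times\B$ and a $k$-isogeny $\psi\colon \A^{n}\times\B \to \prym_{\V'/\V}$ with $\psi\circ\phi$ equal to multiplication by some positive integer $N$. Precomposition with $\phi$ defines a homomorphism $\phi^{*}\colon \Hm_{k}(\A^{n}\times\B,\A)\to \Hm_{k}(\prym_{\V'/\V},\A)$, and it is injective: if $f\circ\phi=0$ then $Nf=f\circ\phi\circ\psi=0$, whence $f=0$ by torsion-freeness. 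Therefore $\rk_\Z \Hm_{k}(\prym_{\V'/\V},\A)\geq \rk_\Z \Hm_{k}(\A^{n}\times\B,\A)$. Now, because finite products of Abelian varieties are also coproducts (a morphism out of $\A^{n}\times\B$ is the same datum as an $(n+1)$-tuple of morphisms out of the factors), one has
$$\Hm_{k}(\A^{n}\times\B,\A)\;\cong\;\enn_{k}(\A)^{\oplus n}\oplus \Hm_{k}(\B,\A),$$
so its $\Z$-rank equals $n\cdot\rk(\enn_{k}(\A))+\rk_\Z\Hm_{k}(\B,\A)\geq n\cdot\rk(\enn_{k}(\A))$. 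Combining the two inequalities yields $\rk(\A_a(\KK))\geq n\cdot\rk(\enn_{k}(\A))$. Equivalently, one may tensor everything with $\Q$ and use that $X\mapsto \Hm_{k}(X,\A)\otimes\Q$ sends $k$-isogenies to isomorphisms, reading off $\rk$ as a $\Q$-dimension.

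I do not expect a genuine obstacle in this part: all of the substantive content is carried by the first assertion of the theorem, which I am assuming. The only points needing care are that $\A[s](k)$ is finite (immediate over a global field), that every Hom-group occurring is the group of $k$-rational homomorphisms (automatic, since $\sigma$, $\A$, $\B$, $\prym_{\V'/\V}$ and the chosen isogenies are all defined over $k$), and the product-is-coproduct identity for Abelian varieties used above. Note that for the stated inequality one needs only the isogeny $\prym_{\V'/\V}\sim_k\A^{n}\times\B$ together with $\rk_\Z\Hm_{k}(\B,\A)\geq 0$; the extra hypotheses on $\B$ serve to make the integer $n$ canonical — namely the multiplicity of $\A$ in the Poincaré decomposition of $\prym_{\V'/\V}$ — and, when $\B$ admits no nonzero homomorphism to $\A$, they upgrade the inequality to an equality $\rk(\A_a(\KK)) = n\cdot\rk(\enn_{k}(\A))$.
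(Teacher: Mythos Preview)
Your proposal only treats the ``Moreover'' clause; you explicitly assume the isomorphism $\A_a(\KK)\cong\Hm_k(\prym_{\V'/\V},\A)\oplus\A[s](k)$, but this is the substantive assertion of the theorem and the paper does prove it. The argument runs as follows. Choose the Albanese map $i_{\V'}:\V'\to\alb(\V')$ so that $v'_0$ maps to the origin; then Lang's universal property (Theorem~4, Ch.~II of \cite{lang3}) gives $\A(\KK')\cong\Hm_k(\alb(\V'),\A)\oplus\A(k)$, a point $P$ corresponding to $(\lambda,Q)$ via $P(v')=\lambda(i_{\V'}(v'))+Q$. Under this identification the Galois generator acts by $\gamma^j\cdot(\lambda,Q)=(\lambda\circ\tilde\gamma^j,Q)$. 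By the description of rational points on a twist (Prop.~1.1 of \cite{haz1}), $(\lambda,Q)$ lies in $\A_a(\KK)$ exactly when it is fixed by the $a$-twisted $G$-action; unwinding this with $a_{\gamma^j}=\sigma^j$ forces $\lambda$ to annihilate $\text{Im}(id+\tilde\gamma+\cdots+\tilde\gamma^{s-1})$ and $sQ=0$. Since $\prym_{\V'/\V}$ is by definition the quotient of $\alb(\V')$ by this image, such $\lambda$ are precisely the homomorphisms factoring through $\prym_{\V'/\V}$, and the isomorphism follows. None of this appears in your proposal.

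For the rank inequality itself your argument is the same as the paper's --- split $\Hm_k(\A^n\times\B,\A)$ as $\enn_k(\A)^{\oplus n}\oplus\Hm_k(\B,\A)$ and discard the last summand --- and is in fact more careful on one point: the paper writes ``$\cong$'' between $\Hm_k(\prym_{\V'/\V},\A)$ and $\Hm_k(\A^n\times\B,\A)$ although only a $k$-isogeny is assumed, whereas you justify the passage via torsion-freeness of Hom-groups (equivalently by tensoring with $\Q$). Your closing remark that the hypotheses on $\B$ are needed only to make $n$ canonical or to upgrade to an equality, not for the stated inequality, is also correct.
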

As an application of this theorem, for given integers $2\leq s \leq r \leq n  $,
we   consider the  cyclic $s$-cover $\pi: \Cb_n \rightarrow \Vb_n$ 
where $\Cb_n$ is the product of $n$ copies of 
the curve $\Cc_{s, f}$ given by the affine equation $y^s=f(x)$ 
with $f(x) \in k[x]$ of degree $r$, and  $\Vb_n$ is   the quotient of $\Cb_n$ by a certain cyclic subgroup of order  $s$ of $\aut (\Cb_n)$, see Section \ref{rpavff}. Let   $\Cc_{s, f}^{\xi}$  be the twist  of $\Cc_{s, f}$  by the cyclic extension $L| K$, 
where $K=k(\Vb_n)$ and $L=k(\Cb_n)$. 
Denote by $J(\Cc_{s, f})$  the Jacobian variety of $\Cc_{s, f}$ and let $J(\Cc_{s,f})[s](k)$ 
to be its subgroup of $k$-rational $s$-division points. We have the following result.
\begin{thm}
	\label{main2}
	With the above notations and  assuming that there exists $k$-rational  point $c \in \Cc_{s,f}(k)$,   we have
	$$J(\Cc_{s,f}^\xi)(K) \cong  \big (\enn_k( J(\Cc_{s,f}))\big)^n \oplus J(\Cc_{s,f})[s](k),$$
	an isomorphism of  Abelian groups and hence,  
	$$\rk(J(\Cc_{s,f}^\xi)(K))\ge n\cdot \rk (\enn_{k}(J(\Cc_{s,f}))).$$
\end{thm}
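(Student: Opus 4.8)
The plan is to obtain Theorem \ref{main2} as the special case of Theorem \ref{main1} in which $\A=J(\Cc_{s,f})$ and $\sigma$ is the automorphism $\tau_*$ of $J(\Cc_{s,f})$ induced by the order-$s$ automorphism $\tau\colon(x,y)\mapsto(x,\zeta y)$ of $\Cc_{s,f}$; this $\sigma$ has order $s$ (the assertion being empty if $J(\Cc_{s,f})=0$). By the construction in Section \ref{rpavff}, the cyclic subgroup of $\aut(\Cb_n)$ defining $\Vb_n$ is generated by the automorphism $g$ that acts as $\tau$ on each of the $n$ factors of $\Cb_n=\Cc_{s,f}^{\,n}$, so $\pi\colon\Cb_n\to\Vb_n$ is a cyclic $s$-cover with Galois group $G=\langle\gamma\rangle$ acting through $g$, and the $1$-cocycle $\gamma^j\mapsto\sigma^j$ of Theorem \ref{main1} is the image, under the Jacobian functor, of the cocycle $\gamma^j\mapsto\tau^j$ that defines the twist $\Cc_{s,f}^\xi$ over $K=k(\Vb_n)$. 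Since forming the Jacobian of a curve with a rational point commutes with Galois twisting, one gets $\A_a\cong J(\Cc_{s,f}^\xi)$ over $K$. Finally the diagonal point $v_0'=(c,\dots,c)$ lies in $\Cb_n(k)=\V'(k)$, so Theorem \ref{main1} applies and gives
$$J(\Cc_{s,f}^\xi)(K)\;\cong\;\Hm_k\!\big(\prym_{\Cb_n/\Vb_n},\,J(\Cc_{s,f})\big)\ \oplus\ J(\Cc_{s,f})[s](k).$$

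The remaining point is to identify $\prym_{\Cb_n/\Vb_n}$. The subgroup $\langle g\rangle$ acts on $\alb(\Cb_n)=J(\Cc_{s,f})^{\,n}$ diagonally through $\sigma$, and the $\sigma$-fixed abelian subvariety of $J(\Cc_{s,f})$ vanishes, since $\Cc_{s,f}/\langle\tau\rangle\cong\Pp^1$; hence $1-\sigma$ is an isogeny of $J(\Cc_{s,f})$, so its $n$-fold diagonal $1-(\sigma,\dots,\sigma)$ is an isogeny of $J(\Cc_{s,f})^{\,n}$, and therefore $\alb(\Vb_n)$, being the coinvariant quotient of $\alb(\Cb_n)$ by $\langle g\rangle$, is trivial. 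By the definition of the Prym recalled in Section \ref{Ehcc}, $\prym_{\Cb_n/\Vb_n}$ is then all of $\alb(\Cb_n)$, i.e. $\prym_{\Cb_n/\Vb_n}\cong J(\Cc_{s,f})^{\,n}$. Substituting this into the displayed isomorphism and using that $\Hm_k$ turns the finite product $J(\Cc_{s,f})^{\,n}$ in its first argument into the $n$-fold product of $\Hm_k(J(\Cc_{s,f}),J(\Cc_{s,f}))=\enn_k(J(\Cc_{s,f}))$ yields the asserted isomorphism; the rank bound follows immediately because $J(\Cc_{s,f})[s](k)$ is finite, so in fact $\rk(J(\Cc_{s,f}^\xi)(K))=n\cdot\rk(\enn_k(J(\Cc_{s,f})))$.

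I expect the main obstacle to be the identification $\prym_{\Cb_n/\Vb_n}\cong J(\Cc_{s,f})^{\,n}$, which depends both on having fixed, in Section \ref{rpavff}, the precise cyclic subgroup of $\aut(\Cb_n)$ cutting out $\Vb_n$ (the diagonal one, so that the quotient has trivial Albanese and the Prym picks up all $n$ copies of $J(\Cc_{s,f})$) and on the functoriality of the Albanese variety under quotients by finite groups, which is what lets one compute $\alb(\Vb_n)$ as a coinvariant quotient. The other ingredients --- compatibility of Galois twisting with the Jacobian functor, giving $\A_a\cong J(\Cc_{s,f}^\xi)$, and the verification that $v_0'=(c,\dots,c)$ furnishes the $k$-rational point required by Theorem \ref{main1} --- are routine once that structural description is available.
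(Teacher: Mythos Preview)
Your proposal is correct and follows the paper's strategy: specialize Theorem~\ref{main1} with $\A=J(\Cc_{s,f})$, $\sigma=\tilde{\iota}$, $\V'=\Cb_n$, $\V=\Vb_n$, use the diagonal point $(c,\dots,c)\in\Cb_n(k)$, invoke the compatibility of the Jacobian with twisting (the paper cites the lemma on p.~172 of \cite{haz1} for $J(\Cc_{s,f})_a=J(\Cc_{s,f}^\xi)$), and then identify $\prym_{\Cb_n/\Vb_n}$ with $J(\Cc_{s,f})^n$.

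The only real difference is in how that last identification is obtained. The paper applies Proposition~\ref{hhprop1} to factor $\prym_{\Cb_n/\Vb_n}\sim_k\prod_i\prym_{\Cc_{s,f}^{(i)}/\Pp^1}$ and then, via Lemma~\ref{alblem}, shows each factor equals $J(\Cc_{s,f})$ because the norm endomorphism $id+\tilde{\iota}+\cdots+\tilde{\iota}^{s-1}$ vanishes on $J(\Cc_{s,f})$. You instead argue directly on $\alb(\Cb_n)=J(\Cc_{s,f})^n$ that $1-\sigma$ is an isogeny (since $\Cc_{s,f}/\langle\tau\rangle\cong\Pp^1$), whence the image of the norm is finite and the Prym is the whole Albanese. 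Your route is slightly more direct and avoids the product formula; the paper's route isolates that formula as a reusable tool. One small remark: the step ``$\alb(\Vb_n)$ is the coinvariant quotient, hence trivial'' is an unnecessary detour---what you actually need, and what follows immediately from $1-\sigma$ being an isogeny, is that $\text{Im}(id+\tilde{\gamma}+\cdots+\tilde{\gamma}^{s-1})\subseteq\ker(1-\tilde{\gamma})$ is finite, so the Prym quotient in Definition~2.1 is (isogenous to) all of $\alb(\Cb_n)$.
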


The structure of this paper is as follows. In  section  \ref{Ehcc}, we investigate some of the properties of
the Prym varieties associated to the cyclic covers of quasi-projective varieties.    In section \ref{hazres},
we recall the main result of   Hazama from  \cite{ haz3} that we are going to extend in this paper. 
Then,  we prove  Theorems \ref{main1} and \ref{main2} in  sections \ref{mproof1} and 
\ref{rpavff}, respectively.

\section{The Prym variety associated to   the cyclic  covers}
\label{Ehcc}

The notion of Prym variety was introduced by Mumford  in \cite{mumf} and   has been extensively studied  
for double covers of curves in  \cite{beavi}. It has been generalized  for double covers of  irreducible quasi-projective 
varieties in \cite{haz3}. Here, we generalize this notion  to the case of  cyclic $s$-covers of 
varieties.
\begin{deff}
	For an integer $s\geq 2$, the {\it Prym variety} of  the cyclic  $s$-cover  $\pi : \V' \rightarrow \V$ of irreducible 
	quasi-projective varieties over $k$ is defined by  the quotient  Abelian variety 
	$$\prym_{\V'/\V}:=\frac{\alb(\V')}{\text{Im} (id+\tilde{\gamma} +\cdots +\tilde{\gamma}^{s-1})},$$
	where $\alb (\V')$ is the {\it Albanese variety} and $\tilde{\gamma}$ is the automorphism of  $\alb(\V')$
	induced by an   order $s$ automorphism $\gamma \in \aut(\V') $ defined over $k$.
\end{deff}

We note that if both of the varieties $\V$ and $\V'$ are curves, then this definition  is 
compatible with the  one  given in  \cite{ortega}, by the following lemma.
\begin{lema}
	\label{alblem}
	Given an integer $s\geq 2$, let   $\pi: \V' \rightarrow \V$ be a cyclic  $s$-cover of  irreducible quasi-projective varieties, 
	both as well as $\pi$ defined over $k$.
	Suppose that $\gamma \in \aut(\V')$ is an  automorphism of order $s$ defined over $k$. 
	Denote  by $\tilde{\gamma}$  the automorphism of the Albanese variety $\alb(\V')$ induced by $\gamma.$
	Then  there is a $k$-isogeny of Abelian varieties,
	$$\prym_{\V'/\V} \sim_k \ker(id + \tilde{\gamma} + \cdots + \tilde{\gamma}^{s-1} :  \alb(\V') \rightarrow \alb(\V'))^\circ,$$
	where $(*)^\circ$ means the connected component of its origin.
\end{lema}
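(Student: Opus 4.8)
The plan is to exploit the fact that for any endomorphism $\phi$ of an Abelian variety, the image and the kernel are related in a canonical way up to isogeny. Write $N = id + \tilde{\gamma} + \cdots + \tilde{\gamma}^{s-1} \in \enn_k(\alb(\V'))$, so that by definition $\prym_{\V'/\V} = \alb(\V')/\mathrm{Im}(N)$. First I would observe that $\mathrm{Im}(N)$ is an Abelian subvariety of $\alb(\V')$ (the image of a homomorphism of Abelian varieties is always an Abelian subvariety), and likewise $\ker(N)^\circ$ is an Abelian subvariety. The key algebraic identity is that $\tilde{\gamma}$ has order $s$ on $\alb(\V')$ (it is induced by the order-$s$ automorphism $\gamma$, and functoriality of the Albanese construction sends $\gamma^s = id$ to $id$), so $(\tilde{\gamma} - id)\circ N = \tilde{\gamma}^s - id = 0$. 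This shows $\mathrm{Im}(N) \subseteq \ker(\tilde{\gamma} - id) \subseteq \ker(N - s\cdot id)$... more usefully, it shows $\mathrm{Im}(N)$ lands in the $\tilde{\gamma}$-fixed part, on which $N$ acts as multiplication by $s$.

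Next I would use the Poincar\'e reducibility theorem over $k$: there is an Abelian subvariety $W \subseteq \alb(\V')$ with $\alb(\V') \sim_k \mathrm{Im}(N) \times W$, and one checks that the natural composite $\ker(N)^\circ \hookrightarrow \alb(\V') \twoheadrightarrow \alb(\V')/\mathrm{Im}(N) = \prym_{\V'/\V}$ is a $k$-isogeny. Indeed, this composite is surjective up to isogeny because $\dim \ker(N)^\circ + \dim \mathrm{Im}(N) = \dim \alb(\V')$ by the rank–nullity count for $N$, and its kernel is $\ker(N)^\circ \cap \mathrm{Im}(N)$, which is finite: on $\mathrm{Im}(N)$ the endomorphism $N$ acts by multiplication by $s$ (from the identity above), hence $\ker(N)^\circ \cap \mathrm{Im}(N) \subseteq \mathrm{Im}(N)[s]$ is finite. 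Therefore the composite is an isogeny defined over $k$, which is exactly the assertion of the lemma. I would present the dimension count carefully: from $0 \to \ker(N)^\circ \to \alb(\V') \to \mathrm{Im}(N) \to 0$ up to isogeny (valid because $\ker(N)/\ker(N)^\circ$ is finite), we get the additivity of dimensions.

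The main obstacle I anticipate is justifying that $N$ restricted to $\mathrm{Im}(N)$ is multiplication by $s$ — or rather, is an isogeny — since a priori one only knows $N\circ N$ relates to $N$ via the order-$s$ relation; the clean statement is that $\tilde{\gamma}$ fixes $\mathrm{Im}(N)$ pointwise (because $(\tilde{\gamma}-id)N = \tilde{\gamma}^s - id = 0$), and on the fixed locus $N = id + \tilde{\gamma} + \cdots + \tilde{\gamma}^{s-1}$ collapses to $s\cdot id$, so $N|_{\mathrm{Im}(N)} = [s]$ is an isogeny. A secondary technical point is the passage between ``kernel'' and ``connected component of the kernel'': all the exact sequences above are only exact up to isogeny, but since isogenies compose and we are proving an $\sim_k$ statement, this is harmless provided one is consistent about working in the isogeny category of Abelian varieties over $k$. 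Everything is defined over $k$ because $\gamma$, hence $\tilde{\gamma}$, hence $N$, and all the kernels/images and the Poincar\'e complement can be taken over $k$.
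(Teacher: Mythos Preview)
Your proposal is correct and follows essentially the same approach as the paper: both arguments hinge on the identity $(\tilde{\gamma}-id)\circ N = \tilde{\gamma}^s - id = 0$, a dimension count, and the observation that the relevant intersection lies in the $s$-torsion. The only organizational difference is that the paper introduces $\ker(id-\tilde{\gamma})^\circ$ as an intermediate object, obtains $m_1+m_2=m$ from the tangent-space decomposition, and then proves the \emph{equality} $\mathrm{Im}(N)=\ker(id-\tilde{\gamma})^\circ$ before passing to the quotient, whereas you bypass this intermediary and verify directly that $\ker(N)^\circ \to \alb(\V')/\mathrm{Im}(N)$ is an isogeny via rank--nullity and the inclusion $\ker(N)^\circ\cap\mathrm{Im}(N)\subseteq \mathrm{Im}(N)[s]$; your invocation of Poincar\'e reducibility is therefore not actually needed for your argument to go through.
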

\begin{proof}
	Let  $\A=\alb(\V')$ and denote its dimension by $m$. Given 
	$\gamma\in \aut(\A)$ of order $s$. Define
	$m_1:= \dim \ker (id - \gamma)^\circ,$ and $   m_2:= \dim \ker (id+\gamma + \cdots + \gamma^{s-1})^\circ.$
	Then, $m=m_1+m_2$ by considering  the induced action on the tangent space of $\A$ at the  origin.
	We have  $\gamma(P)=P$  for each point  $P$ belonging to the intersection of 
	$\ker (id - \gamma)^\circ $  and $  \ker (id+\gamma + \cdots + \gamma^{s-1})^\circ $, so 
	$0= (id+\gamma + \cdots + \gamma^{s-1})(P)=sP,$ which implies that
	$\ker (id - \gamma)^\circ \cap  \ker (id+\gamma + \cdots + \gamma^{s-1})^\circ  \subseteq \A [s].$
	Thus  $\A$ is $k$-isogenous to their product, i.e.,  
	$$\A \sim_k \ker (id - \gamma)^\circ \times  \ker (id+\gamma + \cdots + \gamma^{s-1})^\circ. $$
	Moreover, we note that $\text{Im} (id+\gamma + \cdots + \gamma^{s-1}) \subseteq \ker(id-\gamma)^\circ$ and 
	$$m-m_2=\dim \text{Im} (id+\gamma + \cdots + \gamma^{s-1}) = \dim \ker(id-\gamma)^\circ =m_1.$$
	Therefore, $\text{Im} (id+\gamma + \cdots + \gamma^{s-1}) =\ker(id-\gamma)^\circ$ that gives the desired result.
\end{proof}

Here, we describe a general method of construction of new $s$-cover using the 
given ones, which we will use  in  the  proof of Theorems \ref{main1} and \ref{main2}.

Let   $\pi_i: \V'_i \rightarrow \V_i$ for $i=1,2$ be $s$-covers of  irreducible quasi-projective varieties, 
all defined over $k$.
Assume  there exist $k$-rational simple points $v'_i \in \V'_i$.  Denote by  
$G_i$   the cyclic Galois group of the corresponding function field extensions.
Then, the covering $\pi_1 \times \pi_2 : \V'_1 \times \V'_2 \rightarrow  \V_1 \times \V_2 $ has  Galois group 
$ G_1 \times G_2 \cong \Z/s\Z \times \Z/s\Z.$ 
Suppose that  $\W$ is   its intermediate cover $\V'_1 \times \V'_2/G$, where 
$G$ is the  group generated by $\gamma= (\gamma_1, \gamma_2) \in \aut(\V'_1 \times \V'_2) $.
Let $\tilde{\gamma}=(\tilde{\gamma}_1, \tilde{\gamma}_2)$  be the order $s$ 
automorphism in $ \aut (\alb (\V'_1) \times \alb(\V'_2))$ corresponding to $\gamma$, where
$\tilde{\gamma}_i$ is an automorphism of $\alb(\V'_i)$
induced by $\gamma_i \in \aut(\V'_i)$ of order $s\geq 2$  for $i=1,2$.
Then there exists a $k$-rational isomorphism 
$$\phi:= \alb(\V'_1)  \times \alb(\V'_2)  \rightarrow \alb(\V'_1\times \V'_2), $$
given by $\phi=\tilde{\phi}_1 + \tilde{\phi}_2$, where $\tilde{\phi}_i: \alb(\V'_i) \rightarrow \alb(\V'_1)  \times \alb(\V'_2)$ is induced  by the inclusion map $\phi_i: \V'_i  \rightarrow \V'_1\times \V'_2 $ defined by
$\phi_1(v)=(v, v'_2)$ and $\phi_2(v)=( v'_1, v)$.
By this isomorphism,   we have 
$\ker( \mu )  \sim_k   \ker(\mu_1) \times \ker(\mu_2),$
where $\mu :=id+ \tilde{\gamma}+ \cdots + \tilde{\gamma}^{s-1}$ and 
$\mu_i :=id+ \tilde{\gamma}_i+ \cdots +\tilde{\gamma}_i^{s-1}$ for $i=1,2$.
This implies that
$\ker( \mu )^\circ  \sim_k   \ker(\mu_1)^\circ \times \ker(\mu_2)^\circ.$
Therefore, applying Lemma (\ref{alblem}) and putting everything together, we conclude the following result.

\begin{prop}
	\label{hhprop1}
	As a $k$-rational isogeny of Abelian varieties, we have
	$$\prym_{\V'_1 \times \V'_2/\W} \sim_k  \prym_{\V'_1/\V_1}\times  \prym_{ \V'_2/\V_2}.$$
\end{prop}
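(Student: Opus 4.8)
The plan is to assemble the proposition from the pieces already laid out in the text, so the proof is mostly a matter of organizing the chain of isogenies. First I would fix notation: write $\A_i = \alb(\V'_i)$ with order $s$ automorphism $\tilde{\gamma}_i$, set $\mu_i = id + \tilde{\gamma}_i + \cdots + \tilde{\gamma}_i^{s-1}$, and similarly $\mu = id + \tilde{\gamma} + \cdots + \tilde{\gamma}^{s-1}$ on $\alb(\V'_1 \times \V'_2)$. By Lemma \ref{alblem} applied to each of the three covers $\pi_1$, $\pi_2$, and $\W \leftarrow \V'_1 \times \V'_2$, we have $k$-isogenies $\prym_{\V'_i/\V_i} \sim_k \ker(\mu_i)^\circ$ and $\prym_{\V'_1 \times \V'_2/\W} \sim_k \ker(\mu)^\circ$. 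So it suffices to prove $\ker(\mu)^\circ \sim_k \ker(\mu_1)^\circ \times \ker(\mu_2)^\circ$, and that reduction is exactly what the paragraph preceding the proposition sets up.

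Next I would justify the isomorphism $\phi = \tilde{\phi}_1 + \tilde{\phi}_2 : \A_1 \times \A_2 \xrightarrow{\ \sim\ } \alb(\V'_1 \times \V'_2)$ carefully: the existence of the $k$-rational simple points $v'_i$ gives base-pointed inclusions $\phi_i : \V'_i \hookrightarrow \V'_1 \times \V'_2$, hence induced morphisms $\tilde{\phi}_i$ of Albanese varieties, and the Künneth-type decomposition $\alb(\V'_1 \times \V'_2) \cong \alb(\V'_1) \times \alb(\V'_2)$ is a standard property of the Albanese functor (it is the universal abelian variety receiving a pointed map, and a product of pointed varieties maps to the product of the Albaneses compatibly). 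I would then check that $\phi$ intertwines the automorphisms, i.e.\ $\phi \circ (\tilde{\gamma}_1 \times \tilde{\gamma}_2) = \tilde{\gamma} \circ \phi$, which holds because $\gamma = (\gamma_1, \gamma_2)$ acts componentwise and $\phi$ is built from the componentwise inclusions. Consequently $\phi \circ (\mu_1 \times \mu_2) = \mu \circ \phi$, so $\phi$ restricts to an isomorphism $\ker(\mu_1 \times \mu_2) \xrightarrow{\sim} \ker(\mu)$; since $\ker(\mu_1 \times \mu_2) = \ker(\mu_1) \times \ker(\mu_2)$, passing to connected components of the identity gives $\ker(\mu)^\circ \cong \ker(\mu_1)^\circ \times \ker(\mu_2)^\circ$ (an isomorphism, in fact, not merely an isogeny — but an isogeny is all we need). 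Combining with the three applications of Lemma \ref{alblem} above, and using that $\sim_k$ is transitive and compatible with products, yields $\prym_{\V'_1 \times \V'_2/\W} \sim_k \prym_{\V'_1/\V_1} \times \prym_{\V'_2/\V_2}$.

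The main obstacle, such as it is, lies in the functoriality claims about the Albanese variety: that $\alb$ sends products to products and that an automorphism of the variety induces a compatible automorphism of its Albanese, all over the (possibly non-algebraically-closed) field $k$ and for quasi-projective — not necessarily smooth projective — varieties. For smooth projective varieties these are classical; in the quasi-projective setting one should either cite the construction of the Albanese used elsewhere in the paper (Section \ref{Ehcc}) or reduce to a smooth compactification. I would state these as the standing properties of $\alb$ recorded in Section \ref{Ehcc} and invoke them, since the proposition is not really where that foundational work belongs. Everything else — intertwining relations, taking kernels, splitting a product of kernels, passing to identity components — is formal diagram-chasing with endomorphisms of abelian varieties, and the transitivity of $k$-isogeny closes the argument.
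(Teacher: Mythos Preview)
Your proposal is correct and follows essentially the same route as the paper: the paper's argument is precisely the paragraph before the proposition, which uses the product decomposition $\phi$ of the Albanese to identify $\ker(\mu)^\circ$ with $\ker(\mu_1)^\circ \times \ker(\mu_2)^\circ$ and then invokes Lemma~\ref{alblem} on each factor. Your write-up is in fact more careful than the paper's (you make the intertwining $\phi\circ(\tilde\gamma_1\times\tilde\gamma_2)=\tilde\gamma\circ\phi$ explicit and flag the functoriality of $\alb$ over $k$ for quasi-projective varieties), but the skeleton is identical.
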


\section{The result of Hazama}
\label{hazres}

In   \cite{haz1, haz3},    Hazama  gave an  explicit method of construction  of Abelian varieties 
that have  large  rank over function fields using the twist theory \cite{bor-ser, haz1}.
In \cite{wbwang}, Wang extended the result of \cite{haz1} to cyclic covers of the projective 
line with prime degrees.
Inspired by Hazama's result, in \cite{yam2}, Yamagishi   reduces the problem of 
constructing elliptic curves of rank $ n \geq 1$ with generators to the problem of finding rational points on a certain varieties. 
By providing a parametrization for  the rational points on those varieties, she  gets all of the 
elliptic curves of  rank  $1 \leq n \leq 7$ defined over a field of characteristic different from two.

Here, we briefly recall the main result of  Hazama  from \cite{haz3}.
Let $\A$ be an Abelian variety  over $k$ with  characteristic different from two.
Suppose that $\pi : \V' \rightarrow \V$ is  a double cover  with Prym variety 
$\prym (\V'/\V)$, of irreducible  quasi-projective varieties   $\V$ and $\V'$ defined over $k$.
Let  $\KK$ and   $\KK'$  be the function field of $\V$ and $\V'$, respectively, and 
$G$  the Galois group of the extension $\KK'|\KK$. Denote by  $\A_a$  the twist of $\A$ by 
the $1$-cocycle   $a=(a_u) \in Z^1(G, Aut(\A))$ defined by $a_{id}=id$ and $a_{\iota}=-id$.

\begin{thm}
	\label{fhaz3}
	With the above notations, assume that there exist a $k$-rational simple point $v'_0 \in \V'$.  
	Then we have an isomorphism of  Abelian groups:
	$$\A_a(\KK) \cong \Hm_{k} (\prym (\V'/\V), \A) \oplus \A [2](k).$$
	Moreover,  if  $\prym_{\V'/\V}$ is $k$-isogenous with $ \Ee^n \times \B$ for some positive integer $n$,
	where $\Ee$ is an  elliptic curve  over $k$ and $\B$ is an Abelian variety  with  no simple component
	$k$-isogenous to $\Ee$, then   
	$\rk(\Ee_b(\KK))=n\cdot \rk (\enn_{k}(\Ee)).$
\end{thm}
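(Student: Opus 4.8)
The plan is to prove the isomorphism by realizing the twist's rational points inside $\A(\KK')$ as an anti-invariant subgroup, decomposing $\A(\KK')$ through the Albanese variety, and then reading off the Prym factor. For the first step I would invoke the standard descent description of a twist: since $\A_a$ becomes $\KK'$-isomorphic to $\A$ and the defining cocycle has $a_{id}=id$, $a_\iota=-id$, a choice of $\KK'$-isomorphism identifies
$$\A_a(\KK) \cong \{x \in \A(\KK') : a_\iota\cdot\iota(x)=x\} = \{x \in \A(\KK') : \iota(x)=-x\},$$
the subgroup of points on which the Galois generator acts by negation; here $\iota$ denotes both the generator of $G$ and, geometrically, the order-two deck transformation $\gamma$ of the double cover, matched through $\KK'=k(\V')$. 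For the second step I would use that, because $\A$ is defined over $k$ and $\V'$ carries the $k$-rational simple point $v'_0$, a $\KK'$-point of $\A$ is the same as a $k$-morphism $\V'\to\A$ via the generic point, and every such morphism factors as a homomorphism out of the Albanese composed with the pointed Albanese map $\alpha:\V'\to\alb(\V')$, plus a constant. This yields
$$\A(\KK') \cong \Hm_k(\alb(\V'),\A)\oplus\A(k),$$
the constant summand being the value at $v'_0$. Under this splitting the Galois action of $\iota$ is precomposition with $\gamma$: it fixes constants and sends a homomorphism part $h$ to $h\circ\tilde\gamma$, where $\tilde\gamma$ is the induced automorphism of $\alb(\V')$ appearing in Lemma \ref{alblem}.

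The computation of anti-invariants is then direct. On the constant summand the condition $\iota(x)=-x$ reads $c=-c$, i.e. $c\in\A[2](k)$. On the homomorphism summand it reads $h\circ\tilde\gamma=-h$, equivalently $h\circ(id+\tilde\gamma)=0$; such an $h$ is exactly one vanishing on $\text{Im}(id+\tilde\gamma)$, hence factoring through the quotient $\prym(\V'/\V)=\alb(\V')/\text{Im}(id+\tilde\gamma)$. The anti-invariant homomorphisms are therefore precisely $\Hm_k(\prym(\V'/\V),\A)$, and assembling the two summands gives the claimed isomorphism $\A_a(\KK)\cong\Hm_k(\prym(\V'/\V),\A)\oplus\A[2](k)$.

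For the rank assertion I specialize to $\A=\Ee$. Since $\A[2](k)$ is finite and groups of homomorphisms between Abelian varieties are finitely generated and torsion-free, the rank of $\Ee_a(\KK)$ equals the $\Z$-rank of $\Hm_k(\prym(\V'/\V),\Ee)$, and this $\Z$-rank is invariant under $k$-isogeny of the source. Using $\prym_{\V'/\V}\sim_k\Ee^n\times\B$ I would split
$$\rk\,\Hm_k(\prym(\V'/\V),\Ee)=\rk\,\Hm_k(\Ee^n,\Ee)+\rk\,\Hm_k(\B,\Ee);$$
the first term equals $\rk\,\enn_k(\Ee)^n=n\cdot\rk\,\enn_k(\Ee)$, while the hypothesis that $\B$ has no simple component $k$-isogenous to the simple variety $\Ee$ forces $\Hm_k(\B,\Ee)=0$. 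This yields the \emph{equality} $\rk(\Ee_a(\KK))=n\cdot\rk\,\enn_k(\Ee)$; note that the equality (rather than a mere lower bound) is exactly what the vanishing of the $\B$-contribution in the one-dimensional case buys us.

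I expect the main obstacle to lie in making the Albanese decomposition genuinely $G$-equivariant. The subtlety is that the deck transformation need not fix $v'_0$, so functoriality of the Albanese produces $\alpha\circ\gamma=\tilde\gamma\circ\alpha+(\text{translation by }\alpha(\gamma(v'_0)))$; one must verify that this translation is absorbed cleanly into the constant summand, so that the induced action on the homomorphism part really is $h\mapsto h\circ\tilde\gamma$ with no residual term contaminating the computation of anti-invariants. The simple-point hypothesis on $v'_0$ is precisely what guarantees that the Albanese map is defined and that this bookkeeping is legitimate.
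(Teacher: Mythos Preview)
Your proposal is correct and follows essentially the same route as the paper. The paper itself does not prove Theorem~\ref{fhaz3} directly, referring instead to Hazama's original article, but its proof of the generalization (Theorem~\ref{main1}) specializes at $s=2$ to exactly your argument: the Albanese decomposition $\A(\KK')\cong\Hm_k(\alb(\V'),\A)\oplus\A(k)$, the identification of $\A_a(\KK)$ via the cocycle condition, and the factorization through the Prym quotient. Your closing remark on the possible translation term when $\gamma$ does not fix $v'_0$ is a genuine subtlety that the paper's proof of Theorem~\ref{main1} passes over in silence when asserting $\gamma^j(\lambda,Q)=(\lambda\circ\tilde\gamma^j,Q)$; you are right to flag it, and right that it can be absorbed into the constant summand.
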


We refer the reader to   2.2  and  2.3 in \cite{haz3}, for the proof of the above theorem.
%
\section{Proof of Theorem \ref{main1}}
\label{mproof1}
Suppose that the natural map $i_{\V'} :  \V' \rightarrow \alb (\V')$ sends $v'_0$ to the origin  
of $\alb(\V')$ so that $i_{\V'}$ is defined over $k$. Then, using  Theorem 4 of  chapter II in \cite{lang3}, we have 
$\A (\KK')=\{ \text{$k$-rational maps $\V' \rightarrow \A$ }\} \cong  \Hm_k(\alb(\V'), \A) \oplus \A (k),$
where $P \in \A (\KK')$ corresponds to the pair $(\lambda, Q) \in  \Hm_k(\alb(\V'), \A) \oplus \A (k)$ such that 
$P(v')= \lambda (i_{\V'}(v'))+Q$  for each $v' \in \V'$. 
This implies that the action of $\gamma^j \in G$ is given by  $\gamma^j (\lambda, Q)= (\lambda \circ \tilde{\gamma}^j, Q)$ 
for $j=0, \cdots , s-1,$ where $\tilde{\gamma}$ is the automorphism of the Albanese variety $\alb(\V')$ induced by $\gamma \in \aut(\V').$
Since $\gamma^s=id$ and hence $\tilde{\gamma}^s=id$, so 
$$\A_a (\KK)\cong \{ P \in \A (\KK'): b_{\gamma^j} \cdot  ^{\gamma^j}(P)=P, \ \forall {\gamma^j} \in G\},$$ 
by applying the proposition 1.1 in \cite{haz1}. 
This implies that   $(\lambda, Q) \in \A_a(\KK)$ if and only if 
$ \gamma^j(\lambda , Q)=(\lambda \circ \tilde{\gamma}^j, Q)= (\lambda \circ \tilde{\gamma}^{s-j}, Q)=\gamma^{s-j}(\lambda , Q).$
Thus,  $(\lambda, Q) \in \A_a(\KK)$ if and only if 
$\lambda$ annihilates  $Im (id+ \tilde{\gamma} + \cdots+ \tilde{\gamma}^{s-1})$ and $Q \in \A [s](k)$. 
Therefore, 
$$\A_a(\KK) \cong \Hm_{k} (\prym_{\V'/\V}, \A) \oplus \A [s](k).$$

Furthermore, if we assume that $\prym_{\V'/\V}$ is $k$-isogenous with $ \A^n \times \B$ for some positive integer $n$,
where $\A$ and $\B$ are   Abelian varieties  defined over $k$ such that $\dim(\B)=0$  or $\dim(\B)> \dim (\A)$
and  none of  irreducible components of $\B$ is $k$-isogenous to $\A$, then
\begin{align*}
\A_a(\KK) & \cong  \Hm_{k} (\prym_{\V'/\V}, \A) \oplus \A [s](k)\\
& \cong  \Hm_{k} (\A^n \times \B, \A) \oplus \A [s](k)\\
& \cong  \Hm_{k} (\A^n, \A) \oplus \Hm_{k} (\B, \A) \oplus \A [s](k)\\
& \cong (End_{k}(\A))^n   \oplus \Hm_{k} (\B, \A) \oplus \A [s](k).
\end{align*}
Therefore, as $\Z$-modules, we have 
$\rk(\A_a(\KK))\geq n\cdot \rk (\enn_{k}(\A)).$
\section{The proof of the theorem \ref{main2}}
\label{rpavff}
Given the integers $2 \leq s \leq r \leq  n$,   fix a  polynomial  $f(x)\in k[x]$ of degree $r$. 
Consider the curve $\Cc_{s, f}: y^s=f (x)$ with a rational point $c \in \Cc_{s,f}(k)$.
It admits an order $s$ automorphism $ \iota: (x,y) \mapsto (x, \zeta \cdot y)$.  For each $1\leq i \leq n$,  let $\Cc_{s, f}^{(i)}$
a copy of $\Cc_{s, f}$ defined by  the affine equation  $y_i^s=f(x_i)$ and  
denote by  $\iota_i$ the corresponding automorphism for each of these curves.
Define  $\Cb_n:= \prod_{i=1}^n \Cc_{s, f}^{(i)}$ which can be expressed by  the simultaneous equations
$y_i^s=f(x_i)$ for $i=1, \cdots, n$.  Let  $G=\langle \gamma \rangle$ to be  the order $s$ cyclic subgroup 
of $ \aut(\Cb_n)$, where  $\gamma:=( \iota_1, \cdots, \iota_n)$, and  define $\Vb_n:=\Cb_n/G$. 
If we assume that  $L$ is the function field of $\Cb_n$, i.e., $L=k( x_1, x_2, \cdots ,x_n, y_1, y_2, \cdots,   y_n),$ 
where  $x_1, x_2, \cdots , x_n$ are independent transcendentals variables and each $y_i$ 
defines a degree $s$ extension by  the equation $y_i^s-f(x_i)=0$, 
then $K=k(\Vb_n)$ the function field of $\Vb_n$ is equal to the set of all invariant elements of $L$ by the action of $G$, i.e.,
$K=L^G=k ( x_1,\cdots, x_n, y_1^{s-1}y_2, \cdots, y_1^{s-1}y_{n-1}).$
Since $(y_1^{s-1}y_{i+1})^s=f(x_1)^{s-1}f(x_{i+1})$ holds for $i=1, \cdots, n-1$, so by defining   $z_i:=y_1^{s-1}y_{i+1}$  
the variety  $\Vb_m$ can be expressed by  $z_i^s=f(x_1)^{s-1}f(x_{i+1})$, for $i=1,\cdots, n-1$.
Hence,  $L|K$ is  a   cyclic extension of degree $s$ determined by   $y_1^s=f(x_1)$, i.e., 
$$L=K(y_1)=k(x_1, \cdots, x_n, z_1, \cdots, z_{n-1})(y_1).$$
Define $\Cc_{s, f}^{\xi}$ to be the twist  of $\Cc_{s, f}$  by the extension $L|K$ 
and let $J(\Cc_{s, f}^{\xi})$ to denote  its Jacobian variety.
In a similar way as  Corollary 3.1 in \cite{haz1}, one can see that $\Cc_{s, f}^{\xi}$ 
is defined by the affine equation   $f(x_1)z^s=f(x).$
It is also easy to check that  $\Cc_{s, f}^{\xi}$ contains   the  following $K$-rational points:
\begin{equation}
\label{Kpoint}  
P_1:=(x_1, 1)\ \text{and} \ P_i:=(x_{i+1}, z_i/f(x_1)) \ \text{ for} \ ( 1 \leq i \leq n-1).
\end{equation}

\begin{rem}
	The  construction of the varieties $\Cb_n$ and $\Vb_n$ generalizes that  given by Yamagishi in  \cite{yam2},
	which is used to find elliptic curves of high  rank  that have a  given set of algebraic numbers   as $x$-coordinates of
	the  generators of their Mordell-Weil group.
\end{rem}
By the fact that the Albanese and Jacobian varieties of curves coincide and
applying  Lemma \ref{alblem}  to  $\V'=\Cc_{s,f}^{(i)}=\Cc_{s,f}$ and $\V=\Pp^1$, we have 
$$\prym_{\Cc_{s,f}^{(i)}/\Pp^1} =\frac{J(\Cc_{s,f}^{(i)})}{\text{Im} (id+ \tilde{\iota}+ \cdots +\tilde{\iota}^{s-1})}
\sim_k \ker \big(id+ \tilde{\iota}+ \cdots +\tilde{\iota}^{s-1})^\circ.$$
Since $0=id - \tilde{\iota}^s=(id-\tilde{\iota})(id+ \tilde{\iota}+ \cdots +\tilde{\iota}^{s-1})$ 
and $id \not = \tilde{\iota}$,   we have 
$$0=id+ \tilde{\iota}+ \cdots +\tilde{\iota}^{s-1} \in End(J(\Cc_{s,f}^{(i)}))=End(J(\Cc_{s,f})),$$
which implies that $\prym_{\Cc_{s,f}^{(i)}/\Pp^1}=J(\Cc_{s,f}^{(i)})$ for each $i=1,\cdots, n.$ 
By applying Proposition  \ref{hhprop1}, one can get an $k$-isogeny of Abelian varieties
\begin{equation}
\label{prym}
\prym_{\Cb_n/\Vb_n} \sim_k \prod_{i=1}^n \prym_{\Cc_{s,f}^{(i)}/\Pp^1} = J(\Cc_{s,f})^n.
\end{equation}

Let us   denote by  $Q_i$ the image of $P_i$ ( $i=1,\cdots, n$)
given by  (\ref{Kpoint}) under the canonical   embedding of $\Cc_{s, f}^{\xi} $ into  $ J(\Cc_{s, f}^{\xi})$.
Define $a=(a_u) \in Z^1(G, \aut(J(\Cc_{s,f}))$  by $a_{id}=id$ and $a_{\gamma^j}=\tilde{\iota}^j$ where 
$\gamma^j \in G$ and 
$\tilde{\iota}: J(\Cc_{s,f}) \rightarrow J(\Cc_{s,f})$ is the automorphism induced by $\iota: \Cc_{s,f} \rightarrow \Cc_{s,f}$.
Denote by $J(\Cc_{s,f})_a$ the twist of $J(\Cc_{s,f})$ with the $1$-cocycle $a$. Then, 
$J(\Cc_{s,f})_a=J(\Cc_{s,f}^\xi)$ by  the lemma on page 172 in  \cite{haz1}.
Applying the  theorem \ref{main1}  for $\V'=\Cb_n$,  $\V=\Vb_n$, and $\A =J(\Cc_{s,f})$, we have  
\begin{align*}
J(\Cc_{s,f}^\xi)(K) & \cong  \Hm_{k} (\prym_{\Cb_n/\Vb_n}, J(\Cc_{s,f})) \oplus J(\Cc_{s,f})[s](k)\\
& \cong  \Hm_{k} (J(\Cc_{s,f})^n , J(\Cc_{s,f})) \oplus J(\Cc_{s,f})[s](k)\\
& \cong (\enn_{k}(J(\Cc_{s,f})))^n    \oplus J(\Cc_{s,f})[s](k).
\end{align*}
Thus, as $\Z$-modules, we have $\rk(J(\Cc_{s,f}^\xi)(K)\ge n\cdot \rk (\enn_{k}(J(\Cc_{s,f}))).$
Tracing back the above isomorphisms  shows that the points $Q_1, \cdots, Q_n$ belong to 
the set of independent generators of  $J(\Cc_{s,f}^\xi)(K)$. 




\begin{thebibliography}{99}                                                                                                %
																									%
\bibitem {beavi}   \textsc{Beauville, A.}: \  \textit{Vari\'{e}t\'{e}sde Prym et Jacobiennes intermediares}, 
Ann. scient. \'{E}c.  Norm. Sup.", \textbf{10}, (1977), 309-391.
																																																	%
\bibitem {bor-ser}   \textsc{Borel, A.,  and  Serre, J. -P. }: \  \textit{Th\'{e}or\`{e}mes de finitude en cohomologie galoisienne}, 
Comment. Math. Helv., \textbf{39}, (1964), 111-164.


\bibitem {duje1} \textsc{ Dujella, A.}:\ \textit{High rank elliptic curves with prescribed torsion}, 
{\ttfamily  http://www.maths.hr/~duje/tors.htl, } (2017)








\bibitem {haz1}   \textsc{Hazama, F.}: \  \textit{On the Mordell-Weil group of   certain abelian varieties defined over function fields}, 
J. Number Theory, \textbf{37}, (1991), 168-172.

\bibitem {haz3}   \textsc{Hazama, F.}: \  \textit{Rational points on certain abelian varieties over  function fields}, 
J. Number Theory, \textbf{50}, (1995), 278-285.

\bibitem {slm3} \textsc{Hindry, H., and Silverman,  J. H.}:\ \textit{Diophantine geometry: An introduction},
Graduate Text in Mathematics, Vol. \textbf{201}, Springer-Verlag, New york (2001).


\bibitem {lang3} \textsc{Lang, S.}:\ \textit{Abelian Varieties}, Springer-Verlag, New York/Berlin (1983).


\bibitem {ortega} \textsc{Lange, H.,  and   Ortega, A.}:\ \textit{Prym varieties of cyclic coverings}, 
Geom. Dedicata., \textbf{150}, (2011), 391-403.


\bibitem {mumf} \textsc{Mumford, D.}:\ \textit{Prym varieties (I)}, Contributions to analysis
 (a collection of papers dedicated to Lipman Bers), Academic Press, New York, (1974), 325-350.



\bibitem {poonen} \textsc{Park  J., and et. al.}:\ \textit{A heuristic for boundedness of ranks of elliptic curves}, 
{\ttfamily  https://arxiv.org/abs/1602.01431, } (2016)



\bibitem {salami0} \textsc{Salami, S.}:\ \textit{On some different related problems in Diophantine geometry}, 
Ph.D thesis, Universidade Federal do Rio de Janeiro,  Brazil, (2017).


\bibitem {slm1} \textsc{Silverman,  J. H.}:\ \textit{The Arithmetic of Elliptic Curves}, second edition, 
Graduate Text in Mathematics, Vol. \textbf{106}, Springer-Verlag, New york (2009).



\bibitem {tate-shaf}   \textsc{Tate, J. T. and Shafarevich, I. R.}: \  \textit{The rank of elliptic curves}, 
Akad. Nauk SSSR, \textbf{175}, (1967), 770-773.

\bibitem {ulmer1}  \textsc{Ulmer, D.}: \  \textit{Elliptic Curves with Large rank over function fields}, 
Annals  Math., \textbf{155}, No. 1 (2002), 295-315.

\bibitem {ulmer2}  \textsc{Ulmer, D.}: \  \textit{Rational curves on elliptic surfaces}, 
J. Algebraic Geom., \textbf{26},  (2017), 357-377.


\bibitem {wbwang}   \textsc{ Wang, W. B.}: \  \textit{On the Twist of Abelian Varieties Defined by the Galois Extension of Prime Degree}, Journal of Algebra, \textbf{163}  (3),  (1994), 813 - 818.




\bibitem {yam2}   \textsc{Yamagishi, H.}: \  \textit{A unified method of construction of elliptic curves with high Mordell-Weil rank}, 
Pacific J. Math, \textbf{191}, (1999), 507-524.




\end{thebibliography}
\end{document}